\newtheorem{thm}{Theorem}[section]
\newtheorem{lem}[thm]{Lemma}
\theoremstyle{definition}
\newtheorem{defn}[thm]{Definition}
\theoremstyle{remark}
\newtheorem{rem}[thm]{Remark}
\numberwithin{equation}{section}
\begin{document}

\title[Solovay model and duality principle
]{Solovay model and duality principle between the measure and the Baire
category  in a Polish topological vector space $H(X,S,\mu)$}%
\author{G. Pantsulaia}%
\address{I.~Vekua Institute of Applied
Mathematics, Tbilisi State University, Tbilisi DC 0143, Georgia}%
\email{gogipantsulaia@yahoo.com}%

\thanks{This paper was partially supported by Shota
Rustaveli National Science Foundation's Grant no FR/116/5-100/14}%
\subjclass{28B05, 03E65 , 46E40}%
\keywords{Solovay model, Generalized  integral for a vector-function}%

\begin{abstract}In Solovay model it is shown that  the
duality principle between the measure and the Baire category
holds true with respect to the sentence - '' {\it  The domain of an arbitrary  generalized  integral for a vector-function is of first category.}''

\end{abstract}
\maketitle
\section{Introduction}

In \cite{Solovay1970}, has been  showed  that the existence of a non-Lebesgue measurable set cannot be proved in Zermelo-Frankel set theory $ZF$
if use of the axiom of choice is disallowed. In fact, even adjoining an axiom $DC$ to  $ZF$,  which  allows countably many consecutive choices,
does not  create a theory  strong enough to construct  a non-measurable  set. S. Shelah \cite{Shelah1984} proved that an inaccessible cardinal is
necessary to build a model of set theory in which every set of reals is Lebesgue measurable. A simpler and metamathematically free proof of
Shelah's result was given in \cite{Jean84}. As a corollary, it was proved  that the existence of  an uncountable well ordered set of reals
implies the existence of a non-measurable set of reals.

Here are several thousand papers devoted to investigations  in Solovay model.
For example, in \cite{Wright73} has been proved that in such a model each linear operator on a Hilbert
space is a bounded linear operator. It was shown in \cite{Pan85-3} that the domain of an arbitrary generalized integral in the same model is the
set of the first category.  In \cite{Pan04-1}, an  example of a non-zero non-atomic translation-invariant Borel measure $\nu_p$  on the Banach
space $l_p(1 \le p <\infty)$ is constructed in Solovay's model such that  the condition ''$\nu_p$-almost every element of $l_p$ has a property
$P$ '' implies that ''{\it almost every element of $l_p$} (in the sense of \cite{HSY92}) has the property $P$''.

In order to formulate the main goal of the present paper, we introduce a new approach which can be considered as a certain modification
of the {\it duality principle between the measure and the Baire category}  introduced in \cite{Oxt70}: Let $(E,\tau)$ be a topological vector space. Denote by
$B(E)$ the Borel $\sigma$-algebra of subsets of the space $E$.
A universally measurable subset of $A$  is called small
in the sense of measure if $A$ is Haar null (equivalently, shy) set. Analogously, a subset $B
\subseteq E$ is called small in the sense of category if it is of
first category set in $E$. Further, let $P$ be a sentence
formulated only by using the notions of measure zero, of the first
category and of purely set-theoretical notions. We say that the
duality principle between the measure and the Baire category
holds true with respect to the sentence $P$ if the sentence $P$ is
equivalent to the sentence $P^*$ obtained from the sentence $P$ by
interchanging in it the notions of the above small sets.

The  present paper is devoted to study of the question asking whether the
duality principle holds true between the measure and the Baire category
with respect to the sentence - '' {\it  The domain of an arbitrary  generalized  integral for a vector-function is  of  first category .}''

The paper is organized as follows.

In Section 2 we consider some auxiliary notions and facts from the measure theory and topology. In Section 3 we give the proof of the main result.

\section{Some auxiliary notions and facts from measure theory and topology}

\begin{defn}Let $(E,\tau)$ be a topological space. We say that a subset $Y \subseteq E $ has Baire property if $Y$ admits the following representation
$$
Y=(Z \cup Z^{'})\setminus Z^{''},
$$
where $Z$ is an open set in $E$, $Z^{'}$ and $ Z^{''}$ are sets of the first category.
\end{defn}

\begin{defn} We say that a topological space $(E,\tau)$ is Baire space if an arbitrary non-empty open subset  of $E$ is the set of second
category.
\end{defn}
\begin{rem}
Following Solovay \cite{Solovay1970}, let '$ZF$' denote the axiomatic
set theory of Zermelo-Fraenkel and let '$ZF$ + $DC$' denote the system
obtained by adjoining a weakened form of the axiom of choice, $DC$,
(see p. 52 of \cite{Solovay1970} for a formal statement of $DC$).
The system $ZF$ + $DC$ is important because all the positive results of
elementary measure theory and most of the basic results of elementary
functional analysis, except for the Hahn-Banach theorem and other such
consequences of the axiom of choice, are provable in $ZF$ + $DC$. In
particular, the Baire category theorem for complete metric spaces and
the closed graph theorem for operators between Fr\'{e}chet spaces are
provable in $ZF$ + $DC$.
\end{rem}

Let $ZFC$ be Zermelo-Frankel set theory together with the axiom of choice.
In the presence of the axiom of choice, we identify  cardinals with  initial ordinals. A cardinal $\aleph$ is regular, if each order unbounded
subset of $\aleph$ has power $\aleph$. A cardinal $\aleph$ is inaccessible, if  it is regular,  uncountable,  and for $\aleph^{'} < \aleph$,
$2^{\aleph^{'}} < \aleph$.

Let $I$ be the statement: ''{\it There is an inaccessible cardinal}''.

\begin{lem}(\cite{Solovay1970}, THEOREM 1, p.1) Suppose that there is a transitive $\epsilon$-model of $ZFC$ + $I$. Then there is a transitive
$\epsilon$-model of $ZF$ in which the following pro positions are valid .

(1)   The principle  of dependent  choice $(DC)$.

(2)	Every set of reals is Lebesgue  measurable $(LM)$.

(3)	Every set of reals has the property of  Baire.

(4)	Every uncountable  set of reals contains a perfect  subset.

(5)	Let $\{A_x : x \in R\}$  be an indexed family of non-empty set of reals with  index set the reals.  Then there are Borel functions
$h_1,h_2$
mapping  $R$ into $R$ such that

(a)	$\{x : h_1( x ) \notin 	A_x\}$  has  Lebesgue   measure  zero.

(b)	$\{x : h_2( x ) \notin 	A_x\}$  is of first  category.

\end{lem}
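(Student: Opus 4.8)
This is Solovay's theorem from \cite{Solovay1970}; I only indicate the strategy. The plan is to produce the required model of $ZF$ by forcing over a model of $ZFC+I$. Start with a transitive $\epsilon$-model $M$ of $ZFC+I$ and fix a cardinal $\kappa$ that is inaccessible in $M$. Let $\mathbb{P}=\mathrm{Coll}(\omega,{<}\kappa)$ be the Levy collapse: the finite-support product, taken over all $\lambda<\kappa$, of the posets $\mathrm{Coll}(\omega,\lambda)$ consisting of the finite partial functions from $\omega$ to $\lambda$. Because $\kappa$ is inaccessible, $\mathbb{P}$ satisfies the $\kappa$-chain condition, it makes $\kappa$ into $\omega_1$, and it collapses every $\lambda<\kappa$ to be countable. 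Fix an $M$-generic filter $G\subseteq\mathbb{P}$, work in $M[G]$, and let $N$ be Solovay's inner model: the class of all sets that are hereditarily ordinal-definable from a countable sequence of ordinals, i.e.\ all $x$ such that $x$ and every element of its transitive closure is, in $M[G]$, ordinal-definable from some $\omega$-sequence of ordinals. One verifies that $N$ is a transitive $\epsilon$-model of $ZF$ containing all the ordinals and all the reals of $M[G]$, and that conclusion (1) holds: $N$ is closed under the formation of countable sequences of its own elements, so the principle $DC$ is inherited from $M[G]$.

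The argument then rests on three structural properties of the Levy collapse. (a) \emph{Homogeneity}: $\mathbb{P}$ is weakly homogeneous, so for any first-order formula $\varphi$ with parameters lying in $M$, whether a condition forces $\varphi$ is independent of the condition. (b) \emph{Factoring}: for every $\alpha<\kappa$ the poset $\mathbb{P}$ splits as $\mathrm{Coll}(\omega,{<}\alpha)$ times a tail part which, interpreted in the intermediate model $M[G_\alpha]$ determined by the first factor, is again a Levy collapse of an inaccessible cardinal. (c) \emph{Smallness of reals}: by the $\kappa$-chain condition every name for a real is decided by fewer than $\kappa$ antichains, so every real of $M[G]$ already belongs to some $M[G_\alpha]$ with $\alpha<\kappa$; consequently, for each real $a\in M[G]$ the set of reals of $M[a]$ is countable in $M[G]$, and $M[G]$ arises from $M[a]$ by a Levy-collapse type extension.

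With these in hand one derives the remaining conclusions. For (2) and (3): given $A\subseteq\mathbb{R}$ with $A\in N$, write $A$ as definable in $M[G]$ from a real $a$ together with ordinal parameters. Viewing $M[G]$ over $M[a]$ via (b) and using the homogeneity (a), the truth value of ``$r\in A$'' for a real $r$ random (respectively Cohen) over $M[a]$ is already decided, and collecting the conditions in the measure (respectively category) algebra of $M[a]$ that force membership produces a Borel set $B$ coded inside $M[a]$ such that $(A\setminus B)\cup(B\setminus A)$ is Lebesgue null (respectively meager); it is null because, by (c), the reals of $M[a]$ are countable in $M[G]$, so the reals random (respectively Cohen) over $M[a]$ form a set of full measure (respectively comeager) on which $A$ agrees with $B$. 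Hence every set of reals in $N$ is Lebesgue measurable and has the Baire property. For (4): if $A\in N$ is uncountable, then $A$ must contain reals generic over $M[a]$ through arbitrarily high stages, and a perfect-tree (fusion) construction carried out inside $N$ yields a perfect subset of $A$. For (5): given $\{A_x:x\in\mathbb{R}\}$ in $N$, definable from a real $a$, one fixes in advance a Borel surjection of $\mathbb{R}$ onto $\mathbb{R}$ and sets $h_1(x)$ to be the real it assigns to the canonical random real over $M[a][x]$; since $A_x$ is nonempty this value lands in $A_x$ for all $x$ outside a null set, giving (5)(a), while the same recipe with the canonical Cohen real over $M[a][x]$ yields $h_2$ and (5)(b).

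The technical core --- and the step I expect to be the main obstacle --- is the Borel-code extraction underlying (2) and (3), ``Solovay's measurability lemma'', which has to exploit the homogeneity and the self-similar factoring of the Levy collapse simultaneously; the inaccessibility of $\kappa$ is exactly what makes (b) and (c) available, and, as recalled in the introduction, by Shelah's theorem \cite{Shelah1984} it cannot be removed.
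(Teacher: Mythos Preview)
The paper does not prove this lemma at all: it is stated as a citation of Theorem~1 of \cite{Solovay1970} and is used as a black box, with no argument given. Your sketch is a faithful high-level outline of Solovay's original forcing proof (Levy collapse below an inaccessible, passage to the inner model of sets hereditarily definable from a countable sequence of ordinals, and the homogeneity/factoring/smallness analysis yielding (2)--(5)), so it goes well beyond what the paper itself supplies; for the purposes of this paper the single citation suffices.
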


Let $(X,S,\mu)$  be a probability  space  and   $H$  a separable Banach space over the  vector field $R$. Let denote by  $H(X,S,\mu)$ a class of
all  $\mu$-measurable   mappings  of the $X$  into $H$.  As usual, we  will identify equivalent  mappings, i.e., we will consider  the
factor-space with respect   to an   equivalent  relation on $H(X,S,\mu)$.  If   $\mu$  is separable and  the metric $\rho$  in  $H(X,S,\mu)$  is
defined by
$$
\rho(f,g)=\int_{X}||f-g||/(1+||f-g||)d \mu,
$$
 then  $H(X,S,\mu)$      stands  a Polish topological vector space without  isolated points.
Suppose that $\mu$  is separable and diffused.  Let $M$ be a vector subspace of  $H(X,S,\mu)$   which contains the class of all measurable step
functions.
A linear operator  $I:M \to H$  is called  a generalized  integral    for  a vector-function if for each  step function $g:X \to H$   the equality
$$
I(g)=\sum_{i=1}^na_i\mu(X_i),
$$
holds true, where $(X_i)_{1 \le i \le n}$ is a measurable partition of $X$  and  $g$ takes the value $a_i$  at points
of $X_i$  for each $i ( 1 \le i \le n)$.
It is obvious that  Reiman, Lebesgue and Danjua  integrals  are partial cases of the  generalized  integral    for vector-function and  they  are
realized when under $H$  is taken the real axis  $R$ considered as a Banach space over vector field  $R$.

Let denote by $(SM)$ the transitive $\epsilon$-model of $ZF$ which comes from  Lemma 2.4.
\begin{lem}(\cite{Pan85-3}, Theorem, p. 34 )$(SM)$ The domain of an arbitrary  generalized  integral for a vector-function is of first category in $H(X,S,\mu)$.
\end{lem}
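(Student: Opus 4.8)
The plan is to argue by contradiction in the Solovay model $(SM)$: suppose that there exists a generalized integral $I : M \to H$ whose domain $M$ is of second category in $H(X,S,\mu)$. Since $M$ is a vector subspace, if it is non-meager it must in fact be comeager, and more to the point I want to upgrade non-meagerness to the statement that $M = H(X,S,\mu)$, i.e.\ that $I$ is defined on every $\mu$-measurable vector-function. The route to this is the observation that in $(SM)$ every subset of a Polish space has the Baire property (Lemma 2.4(3), transported from the reals to the Polish space $H(X,S,\mu)$ via a Borel isomorphism), so $M$ has the Baire property; a non-meager subspace with the Baire property of a topological vector space without isolated points is easily shown, by a Pettis-type / Piccard argument ($M - M \supseteq$ a neighbourhood of $0$, hence $M$ absorbs points, hence $M$ is the whole space), to be the entire space. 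So in $(SM)$ the assumption ``$\mathrm{dom}(I)$ is of second category'' forces $I$ to be a generalized integral defined on \emph{all} of $H(X,S,\mu)$.

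The second and main step is to derive a contradiction from the existence of a generalized integral defined on all of $H(X,S,\mu)$. Here I would invoke the closed graph / automatic continuity phenomenon available in $ZF+DC$ (the remark after Definition 2.2 explicitly notes that the closed graph theorem for Fréchet spaces is provable in $ZF+DC$), together with Lemma 2.4(3) once more. The cleanest version: a linear map between Polish groups that is everywhere defined and Baire-measurable is automatically continuous; in $(SM)$ \emph{every} linear map $I : H(X,S,\mu) \to H$ is Baire-measurable because every set has the Baire property, hence $I$ is continuous. But a continuous linear extension of the elementary integral to all of $H(X,S,\mu)$ cannot exist: one exhibits a sequence $(g_n)$ of step functions with $g_n \to 0$ in the metric $\rho$ while $\|I(g_n)\|_H \to \infty$. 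For instance, on a diffused probability space pick disjoint measurable sets $A_n$ with $\mu(A_n) = 2^{-n}$ and let $g_n = 2^n v \cdot \mathbf{1}_{A_n}$ for a fixed unit vector $v \in H$; then $\rho(g_n,0) = \int 2^n/(1+2^n)\,\mathbf{1}_{A_n}\,d\mu \le 2^{-n} \to 0$, whereas $I(g_n) = 2^n v \cdot \mu(A_n) = v$ — so in this particular normalization one instead rescales, say $g_n = n 2^n v \cdot \mathbf{1}_{A_n}$, giving $\rho(g_n,0)\to 0$ but $\|I(g_n)\|_H = n \to \infty$, contradicting continuity at $0$. This contradiction completes the proof.

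The step I expect to be the main obstacle is the first one: passing from ``$M$ is non-meager'' to ``$M$ has the Baire property'' requires knowing that the Solovay conclusions, stated in Lemma 2.4 only for sets of reals, actually hold for arbitrary subsets of the Polish space $H(X,S,\mu)$. Since $H(X,S,\mu)$ is a Polish space without isolated points, it is Borel-isomorphic to $\mathbb{R}$, and the properties ``has the Baire property'' and ``is (non-)meager'' are preserved under Borel isomorphisms that are category-preserving; one has to check that such an isomorphism (or a more careful homeomorphism-on-a-comeager-set argument) can be arranged, or alternatively re-run Solovay's forcing argument directly in the space $H(X,S,\mu)$. A subtler point hidden in the same step is the genuinely constructive (in $ZF+DC$) nature of the Pettis argument: one must be slightly careful that the usual proof of ``non-meager Baire-property subgroup contains a neighbourhood of the identity'' does not smuggle in choice beyond $DC$, but this is standard. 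Once the reduction to an everywhere-defined $I$ is in place, steps two and three are routine applications of automatic continuity plus an explicit divergent sequence of step functions.
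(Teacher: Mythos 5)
The paper does not prove this lemma at all: it is imported verbatim from \cite{Pan85-3} (a 1985 note in Russian), so there is no in-paper argument to compare against. Judged on its own, your proposal is correct and gives a genuine, self-contained proof in the style of Wright's ``all operators on a Hilbert space are bounded'' argument \cite{Wright73}. The skeleton is sound: if $M$ were non-meager, then (every set having the Baire property in $(SM)$) the Pettis--Piccard argument gives $M-M=M\supseteq U$ for some neighbourhood $U$ of $0$; since every neighbourhood of $0$ in a topological vector space is absorbing and $M$ is a subspace, $M=H(X,S,\mu)$; then $I$ is an everywhere-defined, Baire-measurable, hence continuous, additive map into the Banach space $H$, and your rescaled sequence $g_n=n2^nv\cdot\mathbf{1}_{A_n}$ with $\mu(A_n)=2^{-n}$ (available since $\mu$ is diffused, by the Sierpi\'nski intermediate-value property, provable in $ZF+DC$) satisfies $\rho(g_n,0)\le 2^{-n}\to 0$ while $I(g_n)=nv$, contradicting continuity at $0$. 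Two points deserve the care you already flag. First, Lemma 2.4(3) is stated only for sets of reals; the transfer to arbitrary subsets of the perfect Polish space $H(X,S,\mu)$ is standard (any two perfect Polish spaces contain homeomorphic dense $G_\delta$ subsets, and the Baire property relativizes along such a homeomorphism), and the Cantor-scheme construction needed for it lives comfortably in $ZF+DC$; note the paper itself silently performs the analogous transfer for universal measurability in Lemma 2.7. Second, your phrase ``$M$ absorbs points'' inverts the logic slightly --- it is $U$ that is absorbing, whence $M\supseteq U$ forces $M$ to be the whole space --- but this is cosmetic. The closed-graph route you mention as an alternative would not work directly (the graph of $I$ is not obviously closed); the Baire-measurability route you actually take is the right one.
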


\begin{defn}Let $\mathbf{V}$ be a Polish topological vector space. Let $K$ be the class of all probability diffused Borel
measures defined on the Borel $\sigma$-algebra $\mathcal{B}(\mathbf{V})$. We
denote by $\mathcal{{B}}(\mathbf{V})^{\mu}$  the completion of ${\mathcal{B}}(\mathbf{V})$
with respect to the measure $\mu$ for $\mu \in K$. A set $E
\subset \mathbf{V}$ is called  universally measurable if $E \in \cap_{\mu \in
K}{{\mathcal{B}}(\mathbf{V})}^{\mu}$.
\end{defn}
\begin{lem}( [9], Remark 3.4, p. 40 ) (SM) Let $\mathbf{V}$ be a complete separable metric space. Then every subset of $\mathbf{V}$ is universally measurable.

\end{lem}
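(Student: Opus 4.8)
The plan is to reduce the statement to the single case $\mathbf{V}=\Real$ equipped with an arbitrary diffused Borel probability measure, and there to exploit the cumulative distribution function together with part (2) of Lemma~2.4. First I would dispose of the reduction. If $\mathbf{V}$ is countable, then every singleton is closed, so every subset of $\mathbf{V}$ is an $F_\sigma$-set, hence Borel, hence trivially universally measurable. If $\mathbf{V}$ is uncountable, then, being Polish, it is Borel isomorphic to $\Real$ by the Kuratowski isomorphism theorem (a $ZF+DC$ result, hence valid in $(SM)$); fix such a Borel isomorphism $\varphi:\mathbf{V}\to\Real$. For $\mu\in K$ the image measure $\varphi_{*}\mu$ on $\mathcal B(\Real)$ is again a diffused Borel probability measure (diffuseness is preserved since $\varphi$ is a bijection), and for $E\subseteq\mathbf{V}$ one has $E=\varphi^{-1}(\varphi(E))$ with both $\varphi$ and $\varphi^{-1}$ Borel, so $E\in\mathcal B(\mathbf{V})^{\mu}$ as soon as $\varphi(E)\in\mathcal B(\Real)^{\varphi_{*}\mu}$. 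Thus it suffices to prove: in $(SM)$, for every diffused Borel probability measure $\mu$ on $\Real$, every subset of $\Real$ lies in $\mathcal B(\Real)^{\mu}$.

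For the core case let $F(t)=\mu((-\infty,t])$. Since $\mu$ is diffused, $F$ is continuous, non-decreasing, with limits $0$ and $1$ at $\mp\infty$; a standard computation then gives that the push-forward $F_{*}\mu$ is Lebesgue measure $\lambda$ on $[0,1]$. Consequently, for every Lebesgue measurable $A\subseteq[0,1]$ the set $F^{-1}(A)$ lies in $\mathcal B(\Real)^{\mu}$: squeezing $A$ between two Borel sets whose difference is a $\lambda$-null Borel set and taking $F$-preimages (which send Borel sets to Borel sets and $\lambda$-null Borel sets to $\mu$-null Borel sets) sandwiches $F^{-1}(A)$ appropriately. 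Now fix $E\subseteq\Real$. By part (2) of Lemma~2.4 both $F(E)$ and $F(\Real\setminus E)$ are Lebesgue measurable, so $\tilde E:=F^{-1}(F(E))$ and $\hat E:=\Real\setminus F^{-1}(F(\Real\setminus E))$ belong to $\mathcal B(\Real)^{\mu}$, and clearly $\hat E\subseteq E\subseteq\tilde E$. It remains to verify $\mu(\tilde E\setminus\hat E)=0$: one computes $\tilde E\setminus\hat E=F^{-1}\bigl(F(E)\cap F(\Real\setminus E)\bigr)$, and $F(E)\cap F(\Real\setminus E)$ is contained in the set $D$ of values assumed by $F$ on some non-degenerate interval (the only values at which $F$ can fail to be injective); the maximal constancy intervals of $F$ are pairwise non-overlapping and carry pairwise distinct values, so $D$ is countable and $\lambda(D)=0$, whence $\mu(\tilde E\setminus\hat E)\le\mu(F^{-1}(D))=\lambda(D)=0$. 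Squeezing $\hat E$ and $\tilde E$ between Borel sets then exhibits $E$ as an element of $\mathcal B(\Real)^{\mu}$, which, $\mu$ being arbitrary, completes the argument.

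I expect the only points requiring genuine care to be: (i) the verification that $F$ is continuous and $F_{*}\mu=\lambda|_{[0,1]}$ for diffused $\mu$ — routine, but this is exactly where diffuseness enters; (ii) the countability of the set $D$ of non-injectivity values of $F$, which is the geometric fact forcing the inner and outer Borel approximations of $E$ to coincide; and (iii) confirming that every classical tool invoked (the Borel isomorphism theorem, the measurability of monotone functions, basic properties of image measures) is a theorem of $ZF+DC$ and hence legitimate inside $(SM)$. The real mathematical content is concentrated in (ii) together with the single application of part (2) of Lemma~2.4 to the subsets $F(E)$ and $F(\Real\setminus E)$ of $[0,1]$.
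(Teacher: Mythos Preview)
The paper does not supply a proof of this lemma at all; it simply records the statement with a citation to \cite{Pan11}, Remark~3.4. There is therefore nothing in the paper to compare your argument against.

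That said, your argument is correct and is the natural way to deduce the general Polish-space statement from the single fact actually quoted in the paper, namely item~(2) of Lemma~2.4 (every set of reals is Lebesgue measurable in $(SM)$). The reduction by Borel isomorphism to $(\Real,\mu)$ with $\mu$ diffused is standard and valid in $ZF+DC$; the CDF map $F$ pushes $\mu$ forward to Lebesgue measure on $[0,1]$ precisely because $\mu$ is diffused; and your sandwich $\hat E\subseteq E\subseteq\tilde E$ together with the identity $\tilde E\setminus\hat E=F^{-1}\bigl(F(E)\cap F(\Real\setminus E)\bigr)$ and the countability of the non-injectivity values of $F$ closes the gap. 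The points you flag as delicate---continuity of $F$, the countability of $D$, and the $ZF+DC$ status of the tools used---are exactly the places where care is needed, and your treatment of each is sound.
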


\begin{defn}[ \cite{HSY92}, Definition 1, p. 221]  A measure $\mu$ is said to be transverse to a universally measurable  set $S \subset
\mathbf{V} $ if the
following two conditions hold:

(i)~There exists a compact set $U \subset \mathbf{V} $ for which
$0 < \mu(U) <1$.

(ii)~ $\overline{\mu}(S + v) = 0$ for every $v \in \mathbf{V} $, where $\overline{\mu}$ denotes a usual completion of the measure $\mu$.
\end{defn}

\begin{defn}[ \cite{HSY92}, Definition 2, p. 222]
A universally measurable  set $S \subset \mathbf{V} $ is called shy  if there exists
a measure transverse to $S$. More generally, a subset of
$\mathbf{V} $ is called shy if it is contained in a universally measurable shy set.
The complement of a shy set is called a prevalent set.
\end{defn}

\begin{rem} Note that the class of all shy sets in a olish topological vector space $\mathbf{V}$ constitutes a $\sigma$-ideal(see, \cite{HSY92}, Fact $3^{''}$,  p. 223).
\end{rem}

\section{Main result}

\begin{thm}  In the model $(SM)$  the domain of an arbitrary  generalized  integral for a vector-function in $H(X,S,\mu)$  is shy.
\end{thm}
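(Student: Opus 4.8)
The plan is to deduce the statement from Lemma 2.5 (the domain $M$ of an arbitrary generalized integral is of first category in $H(X,S,\mu)$) together with Lemma 2.8 (in $(SM)$ every subset of a complete separable metric space is universally measurable). Since $H(X,S,\mu)$ is a Polish topological vector space, in $(SM)$ the domain $M$ is automatically universally measurable, so it makes sense to ask whether it is shy. By Lemma 2.5, $M$ is contained in a set of first category; by Remark 2.10 the shy sets form a $\sigma$-ideal, so it suffices to produce, for $M$ (or for a first-category set containing it), a transverse measure in the sense of Definition 2.7.

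The key step is therefore: in $(SM)$, \emph{every} first-category subset $F$ of the Polish topological vector space $\mathbf{V}=H(X,S,\mu)$ is shy. To see this I would argue as follows. Fix a nontrivial compact set $U\subseteq \mathbf{V}$ with $0<\lambda(U)<1$ for some probability Borel measure $\lambda$ on $\mathbf{V}$ (such $U$ and $\lambda$ exist because $\mathbf{V}$ is an infinite-dimensional Polish vector space without isolated points; e.g.\ take a nontrivial probability measure supported on a compact metrizable subset, rescaling so the mass of an appropriate sub-ball lies strictly between $0$ and $1$). Write $F=\bigcup_{n} F_n$ with each $F_n$ nowhere dense. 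For the transversality condition (ii) we must show $\overline{\lambda}(F+v)=0$ for every $v\in\mathbf{V}$, i.e.\ that for each $v$ the translate $F+v$ is $\lambda$-null. Here is where the Solovay model does the work: by Lemma 2.4(3) every subset of the reals — and, coding $\mathbf{V}$ by reals, every subset of $\mathbf{V}$ — has the Baire property, and by Lemma 2.4(2),(3) the measure and category ideals interact nicely. More directly, since $\mathbf{V}$ is a Polish group, a translate of a first-category set is first-category, so $F+v$ is again first category. Thus it is enough to know that in $(SM)$ \emph{every first-category subset of $\mathbf{V}$ is $\lambda$-null} for a suitable single probability measure $\lambda$ with $0<\lambda(U)<1$ for some compact $U$; equivalently that the category $\sigma$-ideal is contained in the $\lambda$-null ideal.

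To get that containment I would invoke the Kuratowski–Ulam / Fubini-type dichotomy available in the Solovay model: in $(SM)$ one can choose the witnessing measure $\lambda$ to be (the image under a Borel isomorphism of) Lebesgue measure on a compact subset, and then use Lemma 2.4(5)(b) or the classical fact that in $(SM)$ the meager ideal and the null ideal are ``orthogonal'' in the strong sense that there is a single Borel set which is meager and co-null — translating this, any meager set is contained in a Borel meager set, which can be arranged to be $\lambda$-null by the Erd\H{o}s–Sierpi\'nski-type construction, which in $(SM)$ applies to \emph{all} sets since all sets are measurable and have the Baire property. Concretely: decompose a compact $U$ carrying $\lambda$ as $U = A\cup B$ with $A$ meager and $\lambda(B)=0$; then every meager set, being contained in a countable union of nowhere dense closed sets each of which can be pushed inside a $\lambda$-null set, is $\lambda$-null. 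Hence $F$ is $\lambda$-null, all its translates are $\lambda$-null, $\lambda$ is transverse to $F$, and $F$ — in particular the domain $M$ of the generalized integral — is shy.

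The main obstacle I anticipate is the precise justification that in $(SM)$ one can fix a \emph{single} transverse probability measure $\lambda$ (with a nontrivial compact set of intermediate measure) that simultaneously annihilates \emph{all} translates of \emph{all} meager sets; this is the genuinely model-theoretic input and must be extracted carefully from Solovay's construction (Lemma 2.4, especially items (2), (3) and (5)), rather than from $ZF+DC$ alone. Once that ``meager $\Rightarrow$ $\lambda$-shy'' principle is in hand, combining it with Lemma 2.5 and the $\sigma$-ideal property (Remark 2.10) closes the argument immediately.
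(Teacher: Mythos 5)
Your proposal rests on the claim that in $(SM)$ \emph{every} first‑category subset of $H(X,S,\mu)$ is shy, witnessed by a single transverse measure $\lambda$ that annihilates all translates of all meager sets. That claim is false, and not because of any subtlety of the Solovay model: already in $\mathbf{R}$ (and by a purely Borel, $ZF+DC$ construction that survives in $(SM)$) there is a decomposition $\mathbf{R}=A\cup B$ with $A$ meager and $B$ Lebesgue null, so $A$ is a meager set of \emph{full} measure; since shyness in $\mathbf{R}$ coincides with Lebesgue nullity, $A$ is meager but not shy. The Erd\H{o}s--Sierpi\'nski-type decomposition $U=A\cup B$ that you invoke is precisely this counterexample — it exhibits a meager set with $\lambda(A)=\lambda(U)>0$ — yet you conclude from it that every meager set is $\lambda$-null, which is the reverse of what it shows. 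In infinite-dimensional Polish vector spaces the meager and shy $\sigma$-ideals are likewise incomparable (there are prevalent meager sets), so no choice of $\lambda$ can make ``meager $\Rightarrow$ $\lambda$-null for all translates'' work. The Solovay-model items 2.4(2),(3),(5) do not repair this.

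The missing idea is that the argument must use the fact that the domain $M$ is a proper \emph{linear subspace} of $H(X,S,\mu)$, not merely a first-category set. The paper's proof runs: $M$ is universally measurable (Lemma on universal measurability of all subsets in $(SM)$) and of first category (Lemma 2.5); since $H(X,S,\mu)$ is a Baire space, $M\neq H(X,S,\mu)$, so pick $v\notin M$ and let $L=\mathbf{R}v$. If some translate of $L$ met $M$ in two points $z+t_1v$, $z+t_2v$ with $t_1\neq t_2$, linearity of $M$ would give $(t_2-t_1)v\in M$ and hence $v\in M$, a contradiction; so every translate of $M$ meets $L$ in at most one point, and one-dimensional Lebesgue measure supported on (a compact segment of) $L$ is transverse to $M$. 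Role of $(SM)$: it supplies first-categoricity of $M$ (hence properness) and universal measurability; the transversality itself is elementary linear algebra. Your proposal correctly assembles the universal-measurability and first-category inputs but then discards the linear structure, which is exactly the ingredient that makes the theorem true.
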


\begin{proof} Let $I$ be a  generalized  integral for a vector-function in $H(X,S,\mu)$ and $M$ be it's domain. Since $H(X,S,\mu)$ is a Polish
topological vector space, by Lemma 2.7 we know that  $M$ is universally measurable. By Lemma 2.5 we know that $M$ is the set of the first
category in $H(X,S,\mu)$. By Remark 2.3 we know that $H(X,S,\mu)$ is the Baire space which implies that  $H(X,S,\mu) \setminus M \neq \emptyset$. Let $v \in H(X,S,\mu)
\setminus M$. Let us show that  $v$ spans a line $L$
such that every translate of $L$ meets $M$ in at most one point. Indeed,
assume the contrary. Then there will be an element $z \in H(X,S,\mu)$ and two different
parameters $t_1,t_2 \in \mathbf{R}$ such that $z+t_1 v \in M$ and $z+t_2 v \in M$.  Since $M$ is a vector subspace
of $H(X,S,\mu)$ we deduce that $(t_2-t_1)v \in M$. Using the same
argument we claim that $v \in M$ because $t_2-t_1\neq
0$, but the latter relation is the contradiction. Hence the Lebesgue measure supported on $L$ is transverse
to a universally measurable set $M$ which means that $M$ is shy.

This ends the proof of the theorem.

\end{proof}

By using Lemma  2.5  and Theorem 3.1 we get

\begin{thm}~{\it  In the model $(SM)$  the
duality principle between the measure and the Baire category
is valid with respect to the sentence $P$ defined by

'' {\it  The domain of an arbitrary  generalized  integral for a vector-function is  of  first category in $H(X,S,\mu)$}.''

}

\end{thm}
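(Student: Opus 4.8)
The plan is to combine the two facts that have just been established — Lemma 2.5 (in the model $(SM)$, the domain $M$ of an arbitrary generalized integral for a vector-function is of first category in $H(X,S,\mu)$) and Theorem 3.1 (in the model $(SM)$, the same domain $M$ is shy) — and to observe that together they give both directions of the required equivalence. Recall that the sentence $P$ reads ``the domain of an arbitrary generalized integral for a vector-function is of first category in $H(X,S,\mu)$'', and that the dual sentence $P^{*}$ is obtained from $P$ by replacing ``first category'' with ``shy''; thus $P^{*}$ reads ``the domain of an arbitrary generalized integral for a vector-function is shy in $H(X,S,\mu)$''. To prove that the duality principle holds with respect to $P$ we must show, working inside $(SM)$, that $P \Leftrightarrow P^{*}$.

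First I would note that in $(SM)$ the sentence $P$ is simply \emph{true}: this is exactly the content of Lemma 2.5. Likewise, in $(SM)$ the sentence $P^{*}$ is \emph{true}: this is exactly the content of Theorem 3.1. Since both $P$ and $P^{*}$ hold in $(SM)$, the biconditional $P \Leftrightarrow P^{*}$ holds in $(SM)$ trivially (a biconditional between two true statements is true). Hence the duality principle between measure and Baire category is valid with respect to $P$ in the model $(SM)$, which is the assertion of the theorem.

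I do not anticipate a genuine obstacle in this final step — it is a formal consequence of the two preceding results, and the only care needed is to make the logical bookkeeping explicit: one should state clearly that ``the duality principle holds with respect to $P$'' unwinds, by Definition in the Introduction, to the equivalence of $P$ and $P^{*}$, and that this equivalence is witnessed here because each side is separately a theorem of $(SM)$. If one wanted a more robust statement — for instance, that the equivalence is provable in $ZF+DC$ rather than merely holding in the particular model $(SM)$ — then the real work would be elsewhere, namely in the proofs of Lemma 2.5 and Theorem 3.1, since those are where the special properties of the Solovay model (universal measurability of all subsets of a Polish space, the Baire-property / Lebesgue-measurability conclusions of Lemma 2.4) are actually used. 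For the theorem as stated, however, the argument is the short deduction above.
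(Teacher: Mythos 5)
Your proposal is correct and matches the paper's own argument exactly: the paper derives Theorem 3.2 by simply combining Lemma 2.5 (which gives $P$ in $(SM)$) with Theorem 3.1 (which gives $P^{*}$ in $(SM)$), so the equivalence $P \Leftrightarrow P^{*}$ holds trivially because both sides are true. Your additional remark that the real mathematical content lives in the proofs of Lemma 2.5 and Theorem 3.1 is an accurate reading of where the work is done.
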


{\bf Acknowledgement}. The main result of the present paper was reported on the Section of Foundations of Mathematics and
Mathematical Logic of the
XXIX Enlarged Sessions of the Seminar of
Ilia Vekua Institute of Applied Mathematics of
Ivane Javakhisvili Tbilisi State University.


\begin{thebibliography}{99}
\small

 \bibitem{Solovay1970} {\scshape  Solovay R.M.,} A model of set theory in which every
set of reals is Lebesgue measurable,\textit {Ann.Math.},{92}, 1970, 1--56.

\bibitem{Shelah1984}{\scshape Shelah, S.}  Can you take Solovay's inaccessible away ?. {\it Isr. J. Math.} {\bf 48} 1984), 1--47


\bibitem{Jean84} {\scshape Raisonnier, Jean.} A mathematical proof of S. Shelah's theorem on the measure problem and related results.
{\it Israel J. Math.} {\bf  48(1)} (1984), 48--56.


\bibitem{Wright73}{\scshape Wright, J. D. Maitland.} All operators on a Hilbert space are bounded. {\it Bull. Amer. Math. Soc.} {\bf 79} (1973),
    1247--1250.


\bibitem{Pan85-3}{\scshape Pantsulaia G.R.,}{ Generalized integrals},
 \textit{Soobshch. Akad. Nauk Gruzin. SSR.,} {\bf 117(1)} (1985),  33--36 (in Russian).


\bibitem{Pan04-1}{\scshape Pantsulaia G.R.,}{ Relations  between shy sets and sets
 of $\nu_p$-measure zero in Solovay's Model}, \textit{Bull. Polish Acad. Sci.}, {\bf 52(1)} (2004).


\bibitem{HSY92} {\scshape  Hunt B.R., Sauer T., Yorke J.A.,}
 {  Prevalence:A Translation-Invariant ``Almost Every" On Infinite-Dimensional
  Spaces},\textit{ Bulletin (New Series) of the  American Mathematical Society}, {\bf 27(2)}  (1992), no. 10,
217--238.



\bibitem{Oxt70}{\scshape  Oxtoby J.C.,} {\em Measure and Category}, Berlin, Springer (1970).


\bibitem{Pan11} {\scshape Pantsulaia G.,} \textit{Generators of Shy Sets in Polish Groups.} Nova Science Publishers, Inc., New York, 2011


\end{thebibliography}
\end{document}